\newtheorem{theo}{Theorem}[section]
\newtheorem{lemma}{Lemma}[section]
\newtheorem{re}{Remark}[section]
\title{Resistance matrices of graphs with matrix weights}
\author{Fouzul Atik\thanks{Theoretical Statistics and Mathematics Unit, Indian Statistical Institute, New Delhi 110 016, India. Email: fouzulatik@gmail.com}  \and  R. B. Bapat\thanks{Theoretical Statistics and Mathematics Unit, Indian Statistical Institute, New Delhi 110 016, India. Email: rbb@isid.ac.in} \and M. Rajesh Kannan\thanks{Department of Mathematics, Indian Institute of Technology Kharagpur, Kharagpur, India. Email: rajeshkannan@maths.iitkgp.ernet.in, rajeshkannan1.m@gmail.com }
}
\date{\today}
\begin{document}
\maketitle
\baselineskip=0.25in

\begin{abstract}
The \emph{resistance matrix} of a simple connected graph $G$ is
denoted by $R$, and is defined by $R =(r_{ij})$, where $r_{ij}$ is
the resistance distance between the vertices $i$ and $j$ of $G$. In
this paper, we consider the resistance matrix of weighted graph with
edge weights being positive definite matrices of same size.  We
derive a formula for the determinant and the inverse of the
resistance matrix. Then, we establish an interlacing inequality for
the eigenvalues of resistance
and Laplacian matrices. Using this interlacing inequality, we obtain the inertia of the resistance matrix.\\\\
\textbf{Keywords.} Resistance matrix, Laplacian
matrix, Matrix weighted graph, Inverse, Inertia,  Moore-Penrose inverse.\\
2010 Mathematics Subject Classification: 05C50.
\end{abstract}
\section{Introduction}\label{section1}
Consider an $n$-vertex connected graph $G=(V,E)$, where
$V=\{1,2,\dots,n\}$ is the vertex set and $E=\{e_1,e_2,\dots,e_m\}$
is the edge set. If two vertices $i$ and $j$ of $G$ are adjacent, we
denote it by $i\sim j$.  A \emph{(positive scalar) weighted graph}
is graph in which every edge is assigned  a weight which is a
positive number. If  weight of the each edge is $1$, the graph is
called an \emph{unweighted} graph. The weight of a path is the sum
of the weights of the edges lies in the path. The \emph{distance
matrix} $D(G)$ of a weighted graph $G$ is an $n\times n$ matrix
$(d_{ij})$, where $d_{ij}$ is the minimum of weights of all paths
from the vertex $i$ to  the vertex $j$. The \emph{Laplacian matrix}
$L$ of a weighted graph $G$ is an $n\times n$ matrix defined as
follows: For $i,j\in [n] = \{1, \dots, n\}$, $i\neq j$, the
$(i,j)^{th}$-element of $L$ is $-\frac{1}{w_{ij}}$ if the vertices
$i$ and $j$ are adjacent, where $w_{ij}$ denotes the weight of the
edge joining the vertices $i$ and $j$, and  it is zero, otherwise.
For $i\in [n]$, the $(i,i)^{th}$-entry of  the matrix $L$ is $\sum
_{j \sim i}\frac{1}{w_{ij}}$. Let $e_{ij}$ denote the $n\times 1$
vector with $1$ at the $i^{th}$ place, $-1$ at the $j^{th}$ place,
and zero elsewhere. A matrix $H$ is called a \emph{generalized
inverse}, or $g$-\emph{inverse} of $L$ if $LHL = L.$ It is known
\cite{bapatbook} that for a graph with positive edge weights, the
scalar $e_{ij}^{\prime}He_{ij}$ is same for any $g$-inverse $H$ of
$L$ where $e_{ij}^{\prime}$ is the transpose of $e_{ij}$. The
\emph{resistance distance} between the vertices $i$ and $j$, denoted
by $r(i,j)$, defined as
$r(i,j)=e_{ij}^{\prime}He_{ij}=h_{ii}+h_{jj}-h_{ij}-h_{ji}$, where
$H$ is a $g$-inverse of $L$. In particular, if
$L^\dagger=(k_{ij})$ is the Moore-Penrose inverse(defined
in Section \ref{someresults}) of $L$, then
$r(i,j)=k_{ii}+k_{jj}-2k_{ij}$. The
\emph{resistance matrix} $R$ of a graph is an $n\times n$ matrix
defined as follows: For $i,j\in [n]$, the $(i,j)^{th}$-entry of $R$
is defined as $r_{ij}=r(i,j)$. For a tree, the resistance distance
coincides with the classical distance.

The distance matrix of a connected graph, in particular of a tree,
has been studied by researchers for many years. One of the
remarkable result about the distance matrix of a tree states that,
if $T$ is a tree on $n$ vertices, then the determinant of the
distance matrix $D$ of $T$ is $(-1)^{n-1}(n-1)2^{n-2}$, which is
independent of the structure of the underlying tree \cite{grah2}. In
\cite{grah1}, a formula for the inverse of the distance matrix of a
tree is obtained. An extension of these results for the weighted
trees, where the weights are positive scalars, were considered in
\cite{bapat1}. In \cite{bapat2}, formulae for  determinant  and
inertia of the distance matrix of a weighted tree with edge weights
being square matrices of same order. Resistance distance matrices
are one of the important classes of matrices, associated with
graphs, considered in the chemical literature. The notion of Wiener
index based on the resistance distance has proposed in \cite{klein}.
The formulae for the determinant and the inverse of the resistance
matrix of scalar weighted graph are obtained in \cite{RB1}. Some
inequalities for the eigenvalues of the distance matrix of a tree
and the eigenvalues of the resistance matrix of any connected graph
are established in \cite{merr} and \cite{sun}, respectively.  We
refer to \cite{kool,liu,xiao,xiao2,zhou2} for more information on
the resistance distance.

In this paper, we consider the resistance matrix of any weighted
graph whose edge weights are positive definite matrices of same
size. We derive a formula for the determinant(Theorem
\ref{rdeterminant}) and the inverse(Theorem \ref{rinverse}) of the
resistance matrix. We establish an interlacing inequality for the
eigenvalues of resistance and Laplacian matrices(Theorem
\ref{Rinterlace}). Using this, we obtain the inertia of the
resistance matrix(Theorem \ref{Rinertia}).

This article is organized as follows: In section \ref{someresults}, we first recall some of the definitions and results, which will be used in the subsequent part of the paper.  Then we establish two important properties, one of them related to the principal submatrix of Moore-Penrose inverse of a positive semidefinite matrix(Lemma \ref{nonsingprin}) and another is about the cofactor of block matrices(Lemma \ref{equalcofactor}). In section \ref{main}, we derive some of the properties of the resistance matrix with positive definite matrix edge weights. Using these properties, in section \ref{det-inv-iner}, we obtain formulae for the determinant, inverse and inertia of the resistance matrix with positive definite matrix edge weights.

\section{Some useful results}\label{someresults}
Let $B$ be an $n \times n$ matrix and let $S,K \subseteq [n]$. We
denote by $B[S,K]$, the matrix obtained by selecting the rows of $B$
indexed by $S$, and the columns of $B$ indexed by $K$.  On the other
hand, $B(S,K)$ is the matrix obtained by deleting the rows of $B$
indexed by $S$, and the columns of $B$ indexed by $K$. Similarly, if
$x$ is an $n\times 1$ vector, then $x[S]$ will denote the subvector
of $x$ indexed by indices in $S$. The Kronecker product of matrices $A=(a_{ij})$ of size
$m\times n$ and $B$ of size $p\times q$, denoted by $A\otimes B$, is
defined to be the $mp\times nq$ block matrix $(a_{ij}B)$. It is
known that for matrices $M$, $N$, $P$ and $Q$ of suitable sizes,
$MN\otimes PQ=(M\otimes P)(N\otimes Q)$\cite{hor}.

Let $A$ be an $n \times n$ matrix partitioned as \begin{eqnarray}\label{partmatrix}A = \left[
                     \begin{array}{cc}
                       A_{1,1} & A_{1,2} \\
                       A_{2,1} & A_{2,2}
                     \end{array}
                   \right]
\end{eqnarray} such that the diagonal blocks $A_{1,1}$ and $A_{2,2}$ are square matrices and the matrix $A_{1,1}$ is invertible. Then the Schur complement of $A_{1, 1}$ in $A$ is defined to be the matrix $A_{2,2} - A_{2,1}A_{1,1}^{-1}A_{1,2}$. The determinant of the matrix $A$ and determinant of the Schur complement of $A_{1,1}$ in $A$ has the following interesting relationship: $detA = det(A_{1, 1})(det A_{2,2} - A_{2,1}A_{1,1}^{-1}A_{1,2}).$ This is called  the Schur  formula for the determinant.

Let $B^{'}$ denote the transpose of the matrix $B$. Let $A$ be an $m
\times n$ matrix. A matrix $G$ of order $n\times m$ is said to be a
generalized inverse (or a $g$-inverse) of $A$ if $AGA=A$. A matrix
$G$ of order $n \times m$ is said to be the \emph{Moore-Penrose
inverse} of $A$ if it satisfies (i) $AGA=A$, (ii) $GAG=G$, (iii)
$(AG)'=AG$ and (iv) $(GA)'=GA$. We denote the Moore-Penrose inverse
of the matrix $A$ by $A^\dagger$.

In the next lemma, we prove that if a principal submatrix $A[S,S]$ of a positive semidefinite matrix $A$ is invertible, then the principal submatrix of $A^\dagger$ corresponds to the same index set $S$ is also invertible. i.e.,  $A^\dagger[S,S]$ is invertible.

\begin{lemma}\label{nonsingprin} Let $A$ be an
$n \times n$ positive semidefinite matrix, whose rows and columns
are indexed by $[n]$. For $S\subseteq[n]$, if $A[S,S]$ is
nonsingular then $A^\dagger[S,S]$ is also nonsingular.
\end{lemma}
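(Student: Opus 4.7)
The plan is to exploit the standard positive semidefinite factorization of $A$. Write $r = \mathrm{rank}(A)$ and factor $A = BB'$ where $B$ is $n \times r$ of full column rank. Because $B$ has full column rank, $B'B$ is invertible and $B^\dagger = (B'B)^{-1}B'$. A direct verification of the four Penrose conditions then gives the formula $A^\dagger = B(B'B)^{-2}B'$: one computes $A A^\dagger A$, $A^\dagger A A^\dagger$, and checks symmetry of $AA^\dagger = B(B'B)^{-1}B' = A^\dagger A$ using $A = BB'$ and $(B'B)(B'B)^{-2}(B'B) = I$.

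With this factorization available, let $B_S$ denote the submatrix of $B$ consisting of the rows indexed by $S$, and set $k = |S|$. Reading off the $(S, S)$ principal blocks from the two factorizations immediately yields $A[S,S] = B_S B_S'$ and $A^\dagger[S,S] = B_S (B'B)^{-2} B_S'$. The hypothesis that $A[S,S]$ is nonsingular translates, via the first identity, into the statement that $B_S$ has full row rank $k$; otherwise $B_S B_S'$ would be singular.

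To finish, I would use that $(B'B)^{-2}$ is positive definite, being the square of the invertible symmetric positive definite matrix $(B'B)^{-1}$. For any nonzero $x \in \mathbb{R}^k$, the full row rank of $B_S$ guarantees $B_S' x \neq 0$, and therefore $x' A^\dagger[S,S] x = (B_S' x)' (B'B)^{-2} (B_S' x) > 0$. Hence $A^\dagger[S,S]$ is positive definite, in particular nonsingular.

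The only point requiring real care is the derivation of the formula $A^\dagger = B(B'B)^{-2}B'$ from the factorization $A = BB'$; once that identity is in hand, the rest of the argument is a one-line rank observation combined with the positive definiteness of $(B'B)^{-2}$. Any alternative route (say, via spectral decomposition $A = UDU'$ and $A^\dagger = UD^\dagger U'$) would reduce the problem to essentially the same Gram-matrix argument $B_S M B_S'$ with $M$ positive definite, so this seems the cleanest path.
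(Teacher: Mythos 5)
Your proof is correct, and it takes a somewhat different technical route from the paper's. The paper works from the spectral decomposition $A=\sum_{i=1}^{r}\lambda_i x_i x_i'$: it restricts the eigenvectors to $S$, observes that nonsingularity of $A[S,S]=\bigl[\sqrt{\lambda_i}\,x_i[S]\bigr]\bigl[\sqrt{\lambda_i}\,x_i[S]\bigr]'$ forces the $|S|\times r$ matrix $\bigl[x_1[S]\ \cdots\ x_r[S]\bigr]$ to have full row rank, and then tracks that rank through the diagonal rescaling by $1/\sqrt{\lambda_i}$ to conclude that $A^\dagger[S,S]=\sum_i \frac{1}{\lambda_i}x_i[S]x_i[S]'$ has full rank. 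You instead use a full-rank factorization $A=BB'$ together with the explicit formula $A^\dagger=B(B'B)^{-2}B'$ (which you verify correctly via the Penrose conditions), read off $A[S,S]=B_SB_S'$ and $A^\dagger[S,S]=B_S(B'B)^{-2}B_S'$, and finish with a quadratic-form argument. The two arguments share the same core --- both exhibit $A[S,S]$ and $A^\dagger[S,S]$ as ``restricted rows times a positive definite middle factor times transpose'' and transfer full row rank of the restricted factor from one to the other --- and indeed the spectral decomposition is just a particular full-rank factorization with middle factor $\Lambda^{-1}$. What your version buys is a cleaner bookkeeping (no rank-chasing through diagonal scalings) and a slightly stronger conclusion, namely that $A^\dagger[S,S]$ is positive definite rather than merely nonsingular; what the paper's version buys is self-containedness, since it needs only the standard fact that $A^\dagger$ has spectral decomposition $\sum_i\frac{1}{\lambda_i}x_ix_i'$ rather than the closed-form $B(B'B)^{-2}B'$, which is the one step in your argument that genuinely requires verification.
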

\begin{proof}Let $A$ be of rank $r(\leq n)$. For
$i=1,2,\cdots,r$, let $\lambda_i$ be the nonzero eigenvalues of $A$ with
corresponding eigenvectors $x_i$. Then the spectral decomposition of
$A$ is given by
\begin{eqnarray}
\label{specdecom} A=\lambda_1 x_1 x_1'+\lambda_2 x_2
x_2'+\cdots+\lambda_r x_r x_r'.
\end{eqnarray}
Let $S\subseteq[n]$ with $|S|=p~(\leq r)$ and $A[S,S]$ is
nonsingular. Then
\begin{eqnarray*}
A[S,S]&=&\lambda_1 x_1[S] x_1'[S]+\lambda_2 x_2[S]
x_2'[S]+\cdots+\lambda_r x_r[S] x_r'[S]\\
&=&\left[\sqrt{\lambda_1} x_1[S]~\sqrt{\lambda_2 }x_2[S]
~\cdots~\sqrt{\lambda_r} x_r[S]\right] \left[
                                                       \begin{array}{c}
                                                         \sqrt{\lambda_1} x_1'[S] \\
                                                         \sqrt{\lambda_2} x_2'[S] \\
                                                         \cdots \\
                                                         \sqrt{\lambda_r} x_r'[S] \\
                                                       \end{array}
                                                     \right].
\end{eqnarray*}
Since the rank of the matrix $A[S,S]$ is $p$, the rank of $\left[\sqrt{\lambda_1}
x_1[S]~\sqrt{\lambda_2 }x_2[S] ~\cdots~\sqrt{\lambda_r}
x_r[S]\right]$ is also $p$. Again
\begin{eqnarray*}
\left[\sqrt{\lambda_1} x_1[S]~\sqrt{\lambda_2 }x_2[S]
~\cdots~\sqrt{\lambda_r} x_r[S]\right]=\left[x_1[S]~x_2[S] ~\cdots~
x_r[S]\right]\left[
               \begin{array}{cccc}
                 \sqrt{\lambda_1} & 0 & \cdots & 0 \\
                 0 & \sqrt{\lambda_2} & \cdots & 0 \\
                 \cdots & \cdots & \ddots & \cdots \\
                 0 & 0 & \cdots & \sqrt{\lambda_r} \\
               \end{array}
             \right].
\end{eqnarray*}
Thus we get rank of $\left[x_1[S]~x_2[S] ~\cdots~ x_r[S]\right]$ is
$p$. Then rank of $$\left[x_1[S]~x_2[S] ~\cdots~ x_r[S]\right]\left[
               \begin{array}{cccc}
                 \frac{1}{\sqrt{\lambda_1}} & 0 & \cdots & 0 \\
                 0 & \frac{1}{\sqrt{\lambda_2}} & \cdots & 0 \\
                 \cdots & \cdots & \ddots & \cdots \\
                 0 & 0 & \cdots & \frac{1}{\sqrt{\lambda_r}} \\
               \end{array}
             \right]=\left[\frac{1}{\sqrt{\lambda_1}} x_1[S]~\frac{1}{\sqrt{\lambda_2}} x_2[S] ~\cdots~\frac{1}{\sqrt{\lambda_r}}
x_r[S]\right]$$ is also $p$. We note that the spectral decomposition of
$A^+$ is $$A^\dagger=\frac{1}{\lambda_1} x_1 x_1'+\frac{1}{\lambda_2} x_2
x_2'+\cdots+\frac{1}{\lambda_r} x_r x_r'.$$ Thus
$$A^\dagger[S,S]=\frac{1}{\lambda_1} x_1[S,S] x_1'[S,S]+\frac{1}{\lambda_2}
x_2[S,S] x_2'[S,S]+\cdots+\frac{1}{\lambda_r} x_r[S,S] x_r'[S,S]$$ is
of rank $p$ and hence $A^\dagger[S,S]$ is also nonsingular.
\end{proof}
Next we define the cofactor of a submatrix. Let
$S=\{i_1,i_2,\cdots,i_k\}$ and $K=\{j_1,j_2,\cdots,j_k\}$ be subsets
of $[n]$. For a matrix $A$ of order $n\times n$, cofactor of
the submatrix $A[S,K]$ in $A$ is
$$A_{SK}=(-1)^{i_1+i_2+\cdots+i_k+j_1+j_2+\cdots+j_k}\det A(S,K).$$
Consider a square matrix $A$ whose rows and columns are indexed by
elements in $X=\{1,2,\cdots,ns\}$ and a partition $
\pi=\{X_1,X_2,\cdots,X_n\}$ of $X$ where $|X_i|=s$ for all
$i=1,2,\cdots,n$. We partition the matrix $A$ according to $\pi$ as
\begin{eqnarray}\label{Apartition}
A=\left[
  \begin{array}{cccc}
A_{1,1} & A_{1,2} & \cdots & A_{1,n}\\
A_{2,1} & A_{2,2} & \cdots & A_{2,n}\\
\cdots & \cdots &\cdots &\cdots\\
A_{n,1} & A_{n,2} & \cdots & A_{n,n}\\
\end{array} \right],
\end{eqnarray}
where each $A_{i,j}=A[X_i,X_j]$ is a submatrix (block) of $A$ whose
rows and columns are indexed by elements of $X_i$ and $X_j$,
respectively.
\begin{lemma}\label{equalcofactor} Let $A$ be an
$ns \times ns$ square matrix and it is partitioned as in (\ref{Apartition}). Let $\sum_{j=1}^nA_{i,j}=0$ and  $\sum_{i=1}^nA_{i,j}=0$ for all $i,j\in\{1,2,\cdots,n\}$. Then
the cofactor of any two blocks $A_{i,j}$ and $A_{k,l}$ in $A$ are equal.
\end{lemma}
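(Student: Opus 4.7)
The plan is to prove equality of the cofactors by two elementary moves: first, showing that the cofactors of $A_{i,j}$ and $A_{i,l}$ coincide (same block-row $i$), and second, showing that the cofactors of $A_{i,l}$ and $A_{k,l}$ coincide (same block-column $l$). Chaining the two yields the general claim. Because the hypothesis is symmetric in block-rows and block-columns, it is enough to treat the first move in detail; the second follows identically with the roles of rows and columns interchanged, using the column-sum hypothesis instead of the row-sum hypothesis.

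To execute the first move, I would let $\widetilde{C}_m$ denote the $m$-th block-column of $A$ with the rows indexed by $X_i$ deleted. The row-sum hypothesis $\sum_{m} A_{i',m}=0$, applied to each block-row $i'\neq i$, gives $\sum_{m=1}^{n}\widetilde{C}_m=0$, hence $\widetilde{C}_j=-\sum_{m\neq j}\widetilde{C}_m$. Starting from $A(X_i,X_j)$, whose block-columns are the $\widetilde{C}_m$ for $m\neq j$ in their natural order, I would add each $\widetilde{C}_m$ (for $m\neq j,l$) to the block at position $l$. This amounts to $s(n-2)$ elementary single-column additions and leaves the determinant unchanged, while turning the block at position $l$ into $-\widetilde{C}_j$. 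Extracting a factor of $(-1)^s$ from those $s$ columns, and then sliding the resulting $\widetilde{C}_j$ into its natural position by $|j-l|-1$ adjacent block-swaps — each contributing $(-1)^s$, since such a swap is a product of $s$ disjoint transpositions — would yield
\[
\det A(X_i,X_j) \;=\; (-1)^{s(j-l)}\det A(X_i,X_l).
\]

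The final step is to reconcile this with the cofactor sign. A direct parity count gives $\sum_{a\in X_i}a+\sum_{b\in X_j}b \equiv s(i+j)\pmod 2$, using that $s(s+1)$ is even and $s^2\equiv s\pmod 2$, so the cofactor of $A_{i,j}$ equals $(-1)^{s(i+j)}\det A(X_i,X_j)$. Combined with the identity above and the triviality $(-1)^{2sj}=1$, this gives $(-1)^{s(i+l)}\det A(X_i,X_l)$, which is exactly the cofactor of $A_{i,l}$. The main obstacle I anticipate is precisely this sign bookkeeping: the cofactor convention, the factor $(-1)^s$ extracted from the negated block-column, and the $(-1)^s$ per adjacent block-swap must combine into the target cofactor sign, and the cancellation hinges on the two elementary parity facts above.
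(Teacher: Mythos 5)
Your proposal is correct and follows essentially the same route as the paper: use the zero row-sum hypothesis to rewrite one block-column of the minor as the negative of the deleted block-column via elementary column additions, track the resulting signs against the cofactor convention, and then repeat the argument with rows (zero column sums) and chain the two moves. The only difference is presentational: you carry out the sign bookkeeping for arbitrary block positions $(i,j)\to(i,l)$, including the $(-1)^s$ per adjacent block-swap and the parity identity $\sum_{a\in X_i}a+\sum_{b\in X_j}b\equiv s(i+j)\pmod 2$, whereas the paper verifies the representative case of $A_{1,1}$ versus $A_{1,2}$ and invokes symmetry.
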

\begin{proof}We have
\begin{eqnarray*}
\mbox{Cofactor
of}~A_{1,1}&=&(-1)^{1+2+\cdots+s+1+2+\cdots+s}\det\left[
\begin{array}{cccc}
A_{2,2} & A_{2,3} & \cdots & A_{2,n}\\
A_{3,2} & A_{3,3} & \cdots & A_{3,n}\\
\cdots & \cdots &\cdots &\cdots\\
A_{n,2} & A_{n,3} & \cdots & A_{n,n}\\
\end{array} \right]\\
&=&\det \left[
\begin{array}{cccc}
\sum_{j=2}^nA_{2,j} & A_{2,3} & \cdots & A_{2,n}\\
\sum_{j=2}^nA_{3,j} & A_{3,3} & \cdots & A_{3,n}\\
\cdots & \cdots &\cdots &\cdots\\
\sum_{j=2}^nA_{n,j} & A_{n,3} & \cdots & A_{n,n}\\
\end{array} \right]\\
&=&\det\left[
\begin{array}{cccc}
-A_{2,1} & A_{2,3} & \cdots & A_{2,n}\\
-A_{3,1} & A_{3,3} & \cdots & A_{3,n}\\
\cdots & \cdots &\cdots &\cdots\\
-A_{n,1} & A_{n,3} & \cdots & A_{n,n}\\
\end{array} \right]~~[\mbox{As $\sum_{j=1}^nA_{i,j}=0$}]\\
&=&(-1)^s\det\left[
\begin{array}{cccc}
A_{2,1} & A_{2,3} & \cdots & A_{2,n}\\
A_{3,1} & A_{3,3} & \cdots & A_{3,n}\\
\cdots & \cdots &\cdots &\cdots\\
A_{n,1} & A_{n,3} & \cdots & A_{n,n}\\
\end{array} \right]\\
&=&(-1)^{(1+2+\cdots+s)+(s+1+s+2+\cdots+s+s)}\det\left[
\begin{array}{cccc}
A_{2,1} & A_{2,3} & \cdots & A_{2,n}\\
A_{3,1} & A_{3,3} & \cdots & A_{3,n}\\
\cdots & \cdots &\cdots &\cdots\\
A_{n,1} & A_{n,3} & \cdots & A_{n,n}\\
\end{array} \right]\\
&=&\mbox{Cofactor of}~A_{1,2}.
\end{eqnarray*}
Similarly, we can prove that cofactor of $A_{i,j}$ is equal to
cofactor of $A_{i,k}$, for any $i,j,k$. By using the assumption
$\sum_{i=1}^nA_{i,j}=0$ and applying the above steps to the rows of $A$, we
get the cofactor of $A_{i,j}$  equals to cofactor of $A_{k,j}$, for any
$i,j,k$.
\end{proof}
\section{Resistance matrix with matrix weights}\label{main}
Let $G$ be a connected weighted graph with $n$ vertices, $m$ edges
and all the edge weights of $G$ are positive definite matrices of
order $s\times s$. For a positive definite matrix $A$, let
$\sqrt{A}$ denote the unique positive definite square root of $A$.
The Laplacian matrix $L$ of the graph $G$ is the $ns\times ns$ block
matrix whose row and column blocks are index by the vertices of $G$
and $(i,j)^{th}$ block of $L$   equals to the sum of the inverse of
weight matrices of the edges incident with the vertex $i$ if $i=j$,
and is equals to $-W^{-1}$ if there is an edge between the vertices
$i$ and $j$, where $W$ is the weight of the edge, and zero matrix
otherwise. We assign an orientation to each edge of $G$. Then the
vertex edge incidence matrix $Q$ of $G$ is the $ns\times ms$ block
matrix whose row and column blocks are index by the vertices and the
edges of $G$, respectively. The $(i,j)^{th}$ block of $Q$ is zero
matrix if the $i^{th}$ vertex and the $j^{th}$ edge are not incident
and it is $\sqrt{W_j}^{-1}$ (respectively, $-\sqrt{W_j}^{-1}$) if
the $i^{th}$ vertex and the $j^{th}$ edge are incident, and the edge
originates (respectively, terminates) at the $i^{th}$ vertex, where
$W_j$ is the weight of the $j^{th}$ edge. In this case, it is easy
to see that, the Laplacian matrix $L$ can be written as
$L=QQ^{'}$.

Let $L^\dagger$ be the Moore-Penrose inverse of the Laplacian matrix
$L$ of a graph $G$, and let $K _{i,j},~i,j=1,2,\cdots,n$, be the $s\times s$
blocks of $L^\dagger$. For the graph $G$, we define the resistance
matrix $R$ by an $n\times n$ block matrix whose $(i,j)^{th}$ block
$R_{i,j}$ is defined by
\begin{eqnarray}
\label{rdef1}R_{i,j}=K _{i,i}+K _{j,j}-2K _{i,j},~i,j=1,2,\cdots,n.
\end{eqnarray}
Since the Moore-Penrose inverse of a matrix always exists and it is unique, so the
definition of resistance matrix is well defined. Also,  if
$s=1$, then the above definition of resistance matrix coincides with
the definition of resistance matrix of positive scalar weighted
graphs.

In the next theorem, we prove the matrix $L+\frac{1}{n}J\otimes I_s$ is nonsingular.

\begin{theo}
Let $G$ be a connected graph on $n$ vertices such that the edge weights
are $s\times s$ positive definite matrices.  Then  $L+\frac{1}{n}J\otimes I_s$ is nonsingular.
\end{theo}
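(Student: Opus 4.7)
The plan is to show that $L + \tfrac{1}{n} J \otimes I_s$ is positive definite (hence nonsingular) by verifying that the null spaces of $L$ and $J \otimes I_s$ intersect only in $\{0\}$. Both summands are positive semidefinite: $L = QQ'$ from the block incidence factorization given in the text, and $J \otimes I_s$ is positive semidefinite because $J$ is rank-one and positive semidefinite. So the sum is at least positive semidefinite, and if $x$ lies in the null space of the sum then $x'(L + \tfrac{1}{n} J \otimes I_s)x = 0$ forces $x$ to be a common null vector of $L$ and $J \otimes I_s$.

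Next I would characterize $\ker L$. Write $x \in \mathbb{R}^{ns}$ in block form $x = (x_1, \ldots, x_n)$ with $x_i \in \mathbb{R}^s$. Since $L = QQ'$, we have $Lx = 0$ iff $Q'x = 0$. The block structure of $Q$ says that for each oriented edge $e = (i,j)$ the corresponding block of $Q'x$ equals $\sqrt{W_e}^{-1}(x_i - x_j)$. Since $W_e$ is positive definite, $\sqrt{W_e}^{-1}$ is invertible, so $Q'x = 0$ forces $x_i = x_j$ for every edge $e$. Connectivity of $G$ then propagates this to give $x_1 = x_2 = \cdots = x_n$. Hence $\ker L = \{\mathbf{1}_n \otimes v : v \in \mathbb{R}^s\}$.

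For $\ker(J \otimes I_s)$, writing $x$ in block form as above, $(J \otimes I_s) x$ has every block equal to $x_1 + x_2 + \cdots + x_n$, so $x \in \ker(J \otimes I_s)$ iff $\sum_{i=1}^n x_i = 0$.

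Putting the two conditions together: if $x \in \ker L \cap \ker(J \otimes I_s)$, then $x = \mathbf{1}_n \otimes v$ and $\sum_{i=1}^n x_i = nv = 0$, forcing $v = 0$ and therefore $x = 0$. Consequently $L + \tfrac{1}{n} J \otimes I_s$ is positive definite, hence nonsingular. The only real obstacle is the characterization of $\ker L$, where one must be careful to invoke positive definiteness of each edge weight (to get invertibility of $\sqrt{W_e}^{-1}$) together with connectivity of $G$; once these are in place the rest is immediate.
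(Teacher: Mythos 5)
Your proof is correct, but it follows a different route from the paper's. The paper invokes the result (cited from earlier work on matrix-weighted graphs) that $L$ is positive semidefinite of rank $ns-s$ with $\ker L$ spanned by the columns of $\mathbf{1}\otimes I_s$, and then computes the full spectrum of the perturbed matrix: the nonzero eigenvalues $\lambda_1,\dots,\lambda_{ns-s}$ of $L$ survive unchanged (because the corresponding eigenvectors are orthogonal to $\mathbf{1}\otimes I_s$ and hence killed by $J\otimes I_s$), while the columns of $\mathbf{1}\otimes I_s$ become eigenvectors of $L+\frac{1}{n}J\otimes I_s$ with eigenvalue $1$; nonsingularity is then read off the eigenvalue list. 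You instead avoid any appeal to the known rank of $L$: you use the factorization $L=QQ'$ to characterize $\ker L$ directly (positive definiteness of each $W_e$ gives $x_i=x_j$ across every edge, and connectivity propagates this), observe that $\ker(J\otimes I_s)$ consists of block vectors summing to zero, and conclude via the standard fact that for positive semidefinite summands a null vector of the sum must annihilate each summand. Your argument is more self-contained (it essentially reproves the kernel description the paper cites) and yields the slightly stronger conclusion that $L+\frac{1}{n}J\otimes I_s$ is positive definite; the paper's argument is less elementary in that one respect but gives more, namely the explicit eigenvalues $\lambda_1,\dots,\lambda_{ns-s},1,\dots,1$ of the perturbed matrix. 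Both are valid proofs of the stated theorem.
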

\begin{proof} Since each weight matrix is positive definite,
we get $L$ is  positive semidefinite and rank of $L$ is $ns-s$
\emph{\cite{fou-raj-bap}}. Let the eigenvalues of $L$ be
$\lambda_1,\lambda_2,\cdots,\lambda_{ns-s},0,0,\cdots,0,$ where
multiplicity of $0$ is $s$. Now according to the construction of
$L$, we have $L(\emph{\textbf{1}}\otimes I_s)=0.$ Thus each of the
column vector of $\emph{\textbf{1}}\otimes I_s$, is an eigenvector
of $L$ corresponding to the eigenvalue $0$. Note that column vectors
of $\emph{\textbf{1}}\otimes I_s$ are linearly independent. Let
$x_i$ be an eigenvector of $L$ corresponding to the eigenvalue
$\lambda_i\neq 0$ for $i=1,2,\cdots,ns-s$. As $L$ is symmetric,
\begin{eqnarray*}&&x_i'(\emph{\textbf{1}}\otimes
I_s)=0\\
&\Rightarrow& (\emph{\textbf{1}}'\otimes I_s)x_i=0\\
&\Rightarrow& (\emph{\textbf{1}}\emph{\textbf{1}}'\otimes I_s)x_i=0\\
&\Rightarrow& (J\otimes I_s)x_i=0.
\end{eqnarray*}
Then
\begin{eqnarray*}(L+\frac{1}{n}J\otimes I_s)(\emph{\textbf{1}}\otimes I_s)&=&L(\emph{\textbf{1}}\otimes I_s)+\frac{1}{n}(J\otimes I_s)(\emph{\textbf{1}}\otimes I_s)\\
&=&\frac{1}{n}(J\emph{\textbf{1}}\otimes I_s)\\
&=&(\emph{\textbf{1}}\otimes I_s).
\end{eqnarray*}
Thus each of the column vector of
$\emph{\textbf{1}}\otimes I_s$, is an eigenvector of
$L+\frac{1}{n}J\otimes I_s$ corresponding to the eigenvalue $1$.
Now, for each $i=1,2,\cdots,ns-s$,
\begin{eqnarray*}(L+\frac{1}{n}J\otimes I_s)x_i&=&L x_i+\frac{1}{n}(J\otimes I_s)x_i\\
&=&L x_i\\
&=&\lambda_i x_i.
\end{eqnarray*}
Thus $x_i$ is also an eigenvector of $L+\frac{1}{n}J\otimes I_s$
corresponding to the eigenvalue $\lambda_i$ for $i=1,2,\cdots,ns-s$.
Hence the eigenvalues of $L+\frac{1}{n}J\otimes I_s$ are
$\lambda_1,\lambda_2,\cdots,\lambda_{ns-s},1,1,\cdots,1,$ where
multiplicity of $1$ is $s$. Thus $L+\frac{1}{n}J\otimes I_s$ is invertible.
\end{proof}
In the previous theorem, we have seen that $L+\frac{1}{n}J\otimes I_s$
is nonsingular. Now set $X=(L+\frac{1}{n}J\otimes I_s)^{-1}$.
Since $L(J\otimes I_s)=(J\otimes I_s)L=\textbf{0}$, we have
\begin{eqnarray}\nonumber&&(L+\frac{1}{n}J\otimes I_s)L=L(L+\frac{1}{n}J\otimes I_s)\\
\nonumber&\Rightarrow& X(L+\frac{1}{n}J\otimes I_s)LX=XL(L+\frac{1}{n}J\otimes I_s)X\\
\label{xlelx}&\Rightarrow& LX=XL.
\end{eqnarray}
Using this observation, in the next theorem, we derive a formula for the Moore-Penrose inverse of the Laplacian matrix associated with the graph.

\begin{theo}\label{lplus}
Let $G$ be a connected graph on $n$ vertices such that the edge weights
are $s\times s$ positive definite matrices and let $L$ be the Laplacian matrix associated with $G$. Then  $L^\dagger =X-\frac{1}{n}J\otimes I_s$.
\end{theo}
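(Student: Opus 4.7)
The plan is to verify directly that $H := X - \frac{1}{n} J \otimes I_s$ satisfies the four defining identities of the Moore-Penrose inverse of $L$. The whole proof will rest on establishing two clean identities for $X$: namely $LX = XL = I - \frac{1}{n} J \otimes I_s$, and $X(J \otimes I_s) = (J \otimes I_s) X = J \otimes I_s$. Once these are in hand, each of the four verifications is a one-line manipulation that uses $L(J \otimes I_s) = (J \otimes I_s) L = 0$.

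First I would observe that the computation already performed in the previous theorem shows $(L + \frac{1}{n} J \otimes I_s)(\mathbf{1} \otimes I_s) = \mathbf{1} \otimes I_s$; multiplying on the left by $X$ gives $X(\mathbf{1} \otimes I_s) = \mathbf{1} \otimes I_s$. Right-multiplying by $\mathbf{1}' \otimes I_s$ yields $X(J \otimes I_s) = J \otimes I_s$, and since $X$ and $J \otimes I_s$ are both symmetric, transposing this identity gives $(J \otimes I_s) X = J \otimes I_s$ as well. Substituting $X(J \otimes I_s) = J \otimes I_s$ into the defining identity $X(L + \frac{1}{n} J \otimes I_s) = I$ yields $XL = I - \frac{1}{n} J \otimes I_s$, and the analogous manipulation on the other side, together with $LX = XL$ from equation (\ref{xlelx}), confirms that $LX$ equals the same expression.

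Next I would verify the four Moore-Penrose conditions for $H$. Using $L(J \otimes I_s) = 0$ throughout:
\begin{itemize}
\item $LHL = LXL = (I - \frac{1}{n} J \otimes I_s) L = L$, so $LHL = L$.
\item $HLH = XLX$ (since every cross term with $J \otimes I_s$ is killed by $L$); and $XLX = X(I - \frac{1}{n} J \otimes I_s) = X - \frac{1}{n} J \otimes I_s = H$.
\item $LH = LX = I - \frac{1}{n} J \otimes I_s$, which is symmetric.
\item $HL = XL = I - \frac{1}{n} J \otimes I_s$, which is symmetric.
\end{itemize}
These four facts together give $H = L^\dagger$ by the uniqueness of the Moore-Penrose inverse.

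I do not anticipate a real obstacle here; the only delicate point is confirming commutation between $X$ and $J \otimes I_s$ (equivalently, that $X$ fixes the null space of $L$), and that follows immediately from the fact that $\mathbf{1} \otimes I_s$ is already an eigenvector of $L + \frac{1}{n} J \otimes I_s$ with eigenvalue $1$, which was established in the preceding theorem. Once that is recorded, the rest is bookkeeping.
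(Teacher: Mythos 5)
Your proposal is correct and follows essentially the same route as the paper: both verify the four Moore--Penrose identities for $X-\frac{1}{n}J\otimes I_s$ using $L(J\otimes I_s)=(J\otimes I_s)L=0$, the commutation $LX=XL$, and the identities $X(J\otimes I_s)=(J\otimes I_s)X=J\otimes I_s$. The only difference is cosmetic — you obtain $X(J\otimes I_s)=J\otimes I_s$ directly from $X(\mathbf{1}\otimes I_s)=\mathbf{1}\otimes I_s$, whereas the paper derives it by multiplying the commutation relation by $L+\frac{1}{n}J\otimes I_s$ — so no further comment is needed.
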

\begin{proof}
\begin{eqnarray}\nonumber L(X-\frac{1}{n}J\otimes I_s)L&=&L X L\\
\nonumber &=&L[I_{ns}-\frac{1}{n}X(J\otimes I_s)] \\
\nonumber &=&L-\frac{1}{n}LX(J\otimes I_s)\\
\nonumber &=&L-\frac{1}{n}XL(J\otimes I_s) ~~~\mbox{[By (\ref{xlelx})]}\\
\label{l1} &=&L.
\end{eqnarray}
Again applying (\ref{xlelx}), we get
\begin{eqnarray}\nonumber&&X(J\otimes I_s)=(J\otimes I_s)X\\
\nonumber&\Rightarrow& X(J\otimes I_s)(L+\frac{1}{n}J\otimes I_s)=(J\otimes I_s)X(L+\frac{1}{n}J\otimes I_s)\\
\nonumber&\Rightarrow& X[\frac{1}{n}(J\otimes I_s)(J\otimes I_s)]=(J\otimes I_s)\\
\label{xjj}&\Rightarrow& X(J\otimes I_s)=(J\otimes I_s).
\end{eqnarray}
Similarly, we get
\begin{eqnarray}
\label{jxj} (J\otimes I_s)X=(J\otimes I_s).
\end{eqnarray}
Now,
\begin{eqnarray}\nonumber (X-\frac{1}{n}J\otimes I_s)L(X-\frac{1}{n}J\otimes I_s)&=& X L X\\
\nonumber &=&(I_{ns}-\frac{1}{n}XJ\otimes I_s)X\\
\nonumber &=&X-\frac{1}{n}X(J\otimes I_s)X\\
\label{l2} &=&X-\frac{1}{n}J\otimes I_s ~~~\mbox{[By (\ref{xjj}) and
(\ref{jxj})]}.
\end{eqnarray}
Again,
\begin{eqnarray}\nonumber [(X-\frac{1}{n}J\otimes I_s)L]'&=&L'(X-\frac{1}{n}J\otimes I_s)'\\
\nonumber &=&L(X-\frac{1}{n}J\otimes I_s)\\
\nonumber &=&L X\\
\nonumber &=&X L\\
\label{l3} &=&(X-\frac{1}{n}J\otimes I_s)L
\end{eqnarray}
Similarly,
\begin{eqnarray}\label{l4}[L(X-\frac{1}{n}J\otimes I_s)]'=L(X-\frac{1}{n}J\otimes I_s)
\end{eqnarray}
By combining (\ref{l1}), (\ref{l2}), (\ref{l3}) and (\ref{l4}), we
get the result.
\end{proof}
Now, using Theorem \ref{lplus} and equation (\ref{rdef1}), for
$i,j=1,2,\cdots,n$, we have
\begin{eqnarray}\label{rdef2}R_{i,j}=X_{i,i}+X_{j,j}-2X_{i,j}
\end{eqnarray}
Let $\bar{X}$ denote the diagonal block matrix whose diagonal blocks are
$X_{1,1},X_{2,2},\cdots,X_{n,n}$. Then,  by using the previous equation
we get,
\begin{eqnarray}\label{rmatdef}R=\bar{X}(J\otimes I_s)+(J\otimes I_s)\bar{X}-2X
\end{eqnarray}
For $i,j=1,2,\cdots,n$, we define the $s\times s$ matrices $\tau_i$
and the $ns\times s$ matrix $\tau$ as follows:
\begin{eqnarray*}\tau_i=2I_s-\sum_{j\sim
i}W_{i,j}^{-1}R_{j,i},\\
\tau=[\tau_1,\tau_2,\cdots,\tau_n]'
\end{eqnarray*}

\begin{theo}\label{taudef}
Let $G$ be a connected graph on $n$ vertices such that the edge weights
are $s\times s$ positive definite matrices and $L$ be the Laplacian matrix associated with $G$.
If the matrices $\bar{X}$ and $\tau$ are defined as above, then $L\bar{X}(\textbf{1}\otimes I_s)+\frac{2}{n}(\textbf{1}\otimes
I_s)=\tau.$
\end{theo}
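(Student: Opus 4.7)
The plan is to prove the identity blockwise. Writing $\bar{X}(\mathbf{1}\otimes I_s)$ as the block column vector whose $j$-th block is $X_{j,j}$, the $i$-th block of $L\bar{X}(\mathbf{1}\otimes I_s)$ comes out, using the block structure of $L$, as
\[
L_{i,i}X_{i,i}+\sum_{j\sim i}L_{i,j}X_{j,j}=\sum_{j\sim i}W_{i,j}^{-1}(X_{i,i}-X_{j,j}).
\]
On the other side, I would expand $\tau_i$ by substituting $R_{j,i}=X_{j,j}+X_{i,i}-2X_{j,i}$, which splits the sum into three pieces; the term $\sum_{j\sim i}W_{i,j}^{-1}X_{i,i}$ combines with the diagonal contribution to form $L_{i,i}X_{i,i}$, so the whole expression rearranges into $\sum_{j\sim i}W_{i,j}^{-1}(X_{i,i}-X_{j,j})$ plus exactly $2I_s-2(LX)_{i,i}$.

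So the job reduces to showing $(LX)_{i,i}=\tfrac{n-1}{n}I_s$. This comes from the earlier results: multiplying $X(L+\tfrac1nJ\otimes I_s)=I_{ns}$ out, and using (\ref{xjj}) which gives $X(J\otimes I_s)=J\otimes I_s$, yields $XL=I_{ns}-\tfrac1nJ\otimes I_s$, and then (\ref{xlelx}) gives $LX=XL=I_{ns}-\tfrac1nJ\otimes I_s$. The $(i,i)$-th block of the right-hand side is $I_s-\tfrac1nI_s=\tfrac{n-1}{n}I_s$, exactly what is needed.

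Combining, the $i$-th block of $\tau$ equals $\sum_{j\sim i}W_{i,j}^{-1}(X_{i,i}-X_{j,j})+\tfrac{2}{n}I_s$, which matches the $i$-th block of $L\bar{X}(\mathbf{1}\otimes I_s)+\tfrac{2}{n}(\mathbf{1}\otimes I_s)$. Since the identity holds in each block, it holds globally, completing the proof.

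The main obstacle is essentially bookkeeping: carefully tracking which sums are over neighbors $j\sim i$, which include the diagonal contribution $L_{i,i}=\sum_{k\sim i}W_{i,k}^{-1}$, and which collapse into blocks of $LX$. The key conceptual ingredient is the identification $LX=I_{ns}-\tfrac1nJ\otimes I_s$, and everything else is straightforward algebraic manipulation of the defining expression $R_{j,i}=X_{j,j}+X_{i,i}-2X_{j,i}$ from (\ref{rdef2}).
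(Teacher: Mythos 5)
Your proposal is correct and takes essentially the same route as the paper: both arguments are blockwise computations whose crux is the identity $(LX)_{i,i}=\bigl(1-\tfrac{1}{n}\bigr)I_s$, which the paper records as equation (\ref{taudef_eq2}) by reading off the $(i,i)$-th block of $(L+\tfrac{1}{n}J\otimes I_s)X=I_{ns}$ together with $\sum_{j}X_{j,i}=I_s$, while you obtain it from the global identity $LX=I_{ns}-\tfrac{1}{n}J\otimes I_s$ via (\ref{xjj}) and (\ref{xlelx}). The remaining rearrangement of $\tau_i$ and of the $i$-th block of $L\bar{X}(\textbf{1}\otimes I_s)+\tfrac{2}{n}(\textbf{1}\otimes I_s)$ matches the paper's computation of $\tau_i$ and $\gamma_i$.
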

\begin{proof}
We have
\begin{eqnarray*}(L+\frac{1}{n}J\otimes I_s)X=I_{ns}
\end{eqnarray*}
Equating the $(i,i)^{th}$ block in both sides, we get
\begin{eqnarray}\nonumber &&\sum_{j\sim i}W_{i,j}^{-1}X_{i,i}-\sum_{j\sim
i}W_{i,j}^{-1}X_{j,i}+\frac{1}{n}\sum_{j=1}^n I_s X_{j,i}=I_s\\
\label{taudef_eq1} &\Rightarrow& \left(\sum_{j\sim
i}W_{i,j}^{-1}\right)X_{i,i}-\sum_{j\sim
i}W_{i,j}^{-1}X_{j,i}+\frac{1}{n}\sum_{j=1}^n X_{j,i}=I_s.
\end{eqnarray}
Now,
\begin{eqnarray*} &&(L+\frac{1}{n}J\otimes I_s)(\emph{\textbf{1}}\otimes I_s)=\emph{\textbf{1}}\otimes I_s\\
&\Rightarrow& X(L+\frac{1}{n}J\otimes
I_s)(\emph{\textbf{1}}\otimes I_s)=X(\emph{\textbf{1}}\otimes I_s)\\
&\Rightarrow& (\emph{\textbf{1}}\otimes I_s)=X(\emph{\textbf{1}}\otimes I_s)\\
&\Rightarrow& (\emph{\textbf{1}}'\otimes I_s)=(\emph{\textbf{1}}'\otimes I_s)X\\
&\Rightarrow& \sum_{j=1}^n X_{j,i}=I_s ~~\mbox{for $i=1,2,\cdots,n$}
\end{eqnarray*}
From (\ref{taudef_eq1}),  we get
\begin{eqnarray}
\label{taudef_eq2} \left(\sum_{j\sim
i}W_{i,j}^{-1}\right)X_{i,i}-\sum_{j\sim
i}W_{i,j}^{-1}X_{j,i}=(1-\frac{1}{n})I_s.
\end{eqnarray}
Now, for $i=1,2,\cdots,n,$
\begin{eqnarray*}\tau_i &=&2I_s-\sum_{j\sim
i}W_{i,j}^{-1}R_{j,i}\\
&=&2I_s-\sum_{j\sim
i}W_{i,j}^{-1}(X_{i,i}+X_{j,j}-2X_{j,i})\\
&=&(1+\frac{1}{n})I_s-\sum_{j\sim i}W_{i,j}^{-1}X_{j,j}+\sum_{j\sim
i}W_{i,j}^{-1}X_{j,i}.~~\mbox{[By (\ref{taudef_eq2})]}
\end{eqnarray*}
Let $\gamma_i$ be the $i^{th}$ block of
$L\bar{X}(\textbf{1}\otimes I_s)+\frac{2}{n}(\textbf{1}\otimes
I_s)$. Then, for $i=1,2,\cdots,n$,
\begin{eqnarray*}\gamma_i &=&\sum_{k=1}^n(L\bar{X})_{i,k}I_s+\frac{2}{n}I_s\\
&=&\sum_{k=1}^n\left(\sum_{j=1}^nL_{i,j}\bar{X}_{j,k}\right)+\frac{2}{n}I_s\\
&=&\sum_{k=1}^nL_{i,k}\bar{X}_{k,k}+\frac{2}{n}I_s\\
&=&L_{i,i}\bar{X}_{i,i}+\sum_{k\sim i} L_{i,k}\bar{X}_{k,k}+\frac{2}{n}I_s\\
&=&\left(\sum_{j\sim i} W_{i,j}^{-1}\right)\bar{X}_{i,i}-\sum_{j\sim i} W_{i,j}^{-1}\bar{X}_{j,j}+\frac{2}{n}I_s\\
&=&(1+\frac{1}{n})I_s-\sum_{j\sim i}W_{i,j}^{-1}X_{j,j}+\sum_{j\sim
i}W_{i,j}^{-1}X_{j,i}~~\mbox{[By (\ref{taudef_eq2})]}\\
&=&\tau_i.
\end{eqnarray*}
\end{proof}
\begin{re}\label{1tau1}
\emph{Using the previous theorem, we get the following identities:
\begin{eqnarray*}(\emph{\textbf{1}}'\otimes I_s)\tau &=&(\emph{\textbf{1}}'\otimes I_s)[L\bar{X}(\emph{\textbf{1}}\otimes
I_s)+\frac{2}{n}(\emph{\textbf{1}}\otimes I_s)]\\
&=&\frac{2}{n}(\emph{\textbf{1}}'\otimes
I_s)(\emph{\textbf{1}}\otimes I_s)\\
&=&2I_s\\
\Rightarrow\tau'(\emph{\textbf{1}}\otimes I_s)&=&2I_s.
\end{eqnarray*}}
\end{re}
\begin{theo}\label{rwiden}
Let $G$ be a connected graph on $n$ vertices such that the edge
weights are $s\times s$ positive definite matrices and $R$ be the
resistance matrix of  $G$. Then
\begin{eqnarray*}\sum_{i=1}^n\sum_{j\sim i}
W_{i,j}^{-1}R_{j,i}=2(n-1)I_s.\end{eqnarray*}
\end{theo}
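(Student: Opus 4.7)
The plan is to extract the desired identity directly from the definition of $\tau_i$ by summing over $i$ and then invoking the global identity for $\tau$ already proved in Remark \ref{1tau1}. Since the statement involves a double sum that is structurally the same sum which appears (with a sign) inside each $\tau_i$, the key will be to pass between the block-wise and global descriptions of $\tau$.

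First, I would simply add the defining relation
\[
\tau_i = 2I_s - \sum_{j \sim i} W_{i,j}^{-1} R_{j,i}
\]
over $i = 1, 2, \ldots, n$ to obtain
\[
\sum_{i=1}^{n} \tau_i \;=\; 2nI_s - \sum_{i=1}^{n}\sum_{j \sim i} W_{i,j}^{-1} R_{j,i}.
\]
The left-hand sum is exactly the block-sum of the components of the $ns \times s$ vector $\tau$, so it equals $(\textbf{1}' \otimes I_s)\tau$.

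Next, I would apply Remark \ref{1tau1}, which (using Theorem \ref{taudef}) already shows $(\textbf{1}' \otimes I_s)\tau = 2I_s$. Substituting this gives
\[
2I_s \;=\; 2nI_s - \sum_{i=1}^{n}\sum_{j \sim i} W_{i,j}^{-1} R_{j,i},
\]
and rearranging yields the claimed identity $\sum_{i=1}^{n}\sum_{j \sim i} W_{i,j}^{-1} R_{j,i} = 2(n-1)I_s$.

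There is essentially no obstacle here: all the heavy lifting was carried out in Theorem \ref{taudef} (identifying $L\bar{X}(\textbf{1}\otimes I_s) + \tfrac{2}{n}(\textbf{1}\otimes I_s)$ with $\tau$) and in the subsequent remark (evaluating $(\textbf{1}'\otimes I_s)\tau$). The only thing to be careful about is the block bookkeeping — matching the $s \times s$ block sum $\sum_i \tau_i$ with the product $(\textbf{1}'\otimes I_s)\tau$ — but this is purely a matter of unpacking the Kronecker product notation and does not require any new ideas.
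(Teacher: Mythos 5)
Your proof is correct, and it is not the route the paper takes. The paper never mentions $\tau$ in its proof of this theorem: it starts from the identity $LR=L\bar{X}(J\otimes I_s)-2I_{ns}+\frac{2}{n}(J\otimes I_s)$ (its equation (\ref{rwideq_1}), obtained from $R=\bar{X}(J\otimes I_s)+(J\otimes I_s)\bar{X}-2X$ and $LX=I_{ns}-\frac{1}{n}J\otimes I_s$), equates the $(i,i)$-th blocks to get $-\sum_{j\sim i}W_{i,j}^{-1}R_{j,i}=\sum_{j}L_{i,j}\bar{X}_{j,j}-(2-\frac{2}{n})I_s$, and then sums over $i$, using that the block column sums of $L$ vanish. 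You instead sum the defining relation $\tau_i=2I_s-\sum_{j\sim i}W_{i,j}^{-1}R_{j,i}$ over $i$ and invoke Remark \ref{1tau1}, $(\textbf{1}'\otimes I_s)\tau=2I_s$; since Remark \ref{1tau1} rests on Theorem \ref{taudef}, and both precede this theorem in the paper, there is no circularity, and the block bookkeeping $\sum_i\tau_i=(\textbf{1}'\otimes I_s)\tau$ is exactly the convention the paper itself uses for $\tau$. In substance the two arguments draw on the same underlying computation (the diagonal-block identity established in Theorem \ref{taudef}, respectively re-derived via (\ref{rwideq_1})), but yours buys brevity by reusing what is already proved, while the paper's version has the side benefit of deriving (\ref{rwideq_1}) explicitly, an identity it reuses later in Theorems \ref{taurtaunonsing} and \ref{rinverse}.
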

\begin{proof}
From Theorem \ref{lplus}, we have $L^\dagger  = (X-\frac{1}{n}J\otimes I_s)$. Now,
\begin{eqnarray*}LL^\dagger =L(X-\frac{1}{n}J\otimes I_s)=LX=I_{ns}-\frac{1}{n}(J\otimes I_s)X=I_{ns}-\frac{1}{n}J\otimes I_s.
\end{eqnarray*}
From equation (\ref{rmatdef}), we have
\begin{eqnarray}\nonumber R&=&\bar{X}(J\otimes I_s)+(J\otimes I_s)\bar{X}-2X. \end{eqnarray}
Thus \begin{eqnarray}LR&=&L\bar{X}(J\otimes I_s)-2LX\\
\label{rwideq_1}&=&L\bar{X}(J\otimes
I_s)-2I_{ns}+\frac{2}{n}(J\otimes I_s).
\end{eqnarray}
Equating the $(i,i)^{th}$ block in both sides of (\ref{rwideq_1}) we
get
\begin{eqnarray*}\sum_{j=1}^nL_{i,j}R_{j,i}&=&\sum_{j=1}^n(L\bar{X})_{i,j}(J\otimes
I_s)_{j,i}-2I_s+\frac{2}{n}I_s\\
\Rightarrow L_{i,i}R_{i,i}-\sum_{j\sim i}
W_{i,j}^{-1}R_{j,i}&=&\sum_{j=1}^n(L\bar{X})_{i,j}I_s-(2-\frac{2}{n})I_s\\
\Rightarrow -\sum_{j\sim i}W_{i,j}^{-1}R_{j,i}&=&\sum_{j=1}^n\sum_{k=1}^nL_{i,k}\bar{X}_{k,j}-(2-\frac{2}{n})I_s\\
&=&\sum_{j=1}^nL_{i,j}\bar{X}_{j,j}-(2-\frac{2}{n})I_s.
\end{eqnarray*}
Now,
 \begin{eqnarray*}
 \sum_{i=1}^n\sum_{j\sim i}W_{i,j}^{-1}R_{j,i}&=&-\sum_{i=1}^n\sum_{j=1}^nL_{i,j}\bar{X}_{j,j}+\sum_{i=1}^n(2-\frac{2}{n})I_s\\
&=&-\sum_{j=1}^n\bar{X}_{j,j}\sum_{i=1}^nL_{i,j}+2(n-1)I_s.\\
&=&2(n-1)I_s.
\end{eqnarray*}
\end{proof}

Using Theorem \ref{lplus}, we have the following  identity
\begin{eqnarray}
\nonumber  LRL&=&L(\bar{X}(J\otimes I_s)+(J\otimes
I_s)\bar{X}-2X)L\\
\nonumber  &=&-2LXL\\
\nonumber  &=&-2L(L^\dagger +\frac{1}{n}(J\otimes I_s))L\\
\label{LRL}&=&-2LL^\dagger L=-2L.
\end{eqnarray}
In the next theorem, we establish the matrix $\tau'R\tau$ is positive definite and derive a formula for it.
\begin{theo}\label{taurtaunonsing}
Let $G$ be a connected graph on $n$ vertices such that the edge weights
are $s\times s$ positive definite matrices, and let $L$ and $R$ be the Laplacian matrix  and the resistance matrix of $G$, respectively. If the matrices $\bar{X}$ and $\tau$ are defined as in Theorem \ref{taudef}, then the
matrix $\tau'R\tau$ is  positive definite and is equal to
$2\bar{x}'L\bar{x}+\frac{8}{n}\left[\sum_{i=1}^nX_{ii}-I_s\right]$,
where $\bar{x}=\bar{X}(\emph{\textbf{1}}\otimes I_s)$.
\end{theo}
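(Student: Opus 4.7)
The plan is to establish the explicit formula by a direct algebraic expansion and then, separately, argue positive definiteness by exploiting the kernel structure of $L$.

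First I substitute $\tau = L\bar{x} + \tfrac{2}{n}(\mathbf{1}\otimes I_s)$ from Theorem \ref{taudef} and rewrite $R = \bar{x}(\mathbf{1}'\otimes I_s) + (\mathbf{1}\otimes I_s)\bar{x}' - 2X$, obtained from \eqref{rmatdef} by factoring $J\otimes I_s = (\mathbf{1}\otimes I_s)(\mathbf{1}'\otimes I_s)$ and using the symmetry of $\bar{X}$. Then
\[
\tau'R\tau = \tau'\bar{x}\,(\mathbf{1}'\otimes I_s)\tau + \tau'(\mathbf{1}\otimes I_s)\,\bar{x}'\tau - 2\tau'X\tau.
\]
The identities $(\mathbf{1}'\otimes I_s)\tau = 2I_s = \tau'(\mathbf{1}\otimes I_s)$ from Remark \ref{1tau1} collapse the first two terms to $2\tau'\bar{x} + 2\bar{x}'\tau$. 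Substituting the expression for $\tau$ and using $(\mathbf{1}'\otimes I_s)\bar{x} = \sum_i X_{i,i}$, one gets $\bar{x}'\tau = \bar{x}'L\bar{x} + \tfrac{2}{n}\sum_i X_{i,i}$, which is symmetric and therefore equal to $\tau'\bar{x}$.

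Next I expand $\tau'X\tau$. The two cross terms contain $X(\mathbf{1}\otimes I_s) = (\mathbf{1}\otimes I_s)$ (equation \eqref{xjj}) paired with $L(\mathbf{1}\otimes I_s) = 0$, so they vanish. The quadratic piece $\bar{x}'LXL\bar{x}$ reduces to $\bar{x}'L\bar{x}$: by Theorem \ref{lplus}, $X = L^\dagger + \tfrac{1}{n}(J\otimes I_s)$, hence $LXL = LL^\dagger L + \tfrac{1}{n}L(J\otimes I_s)L = L$ using $LL^\dagger L = L$ and $L(J\otimes I_s) = 0$. The constant piece gives $(\mathbf{1}'\otimes I_s)X(\mathbf{1}\otimes I_s) = nI_s$ by \eqref{xjj}. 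Assembling everything, $\tau'X\tau = \bar{x}'L\bar{x} + \tfrac{4}{n}I_s$, and combining with the first two terms yields the desired identity $\tau'R\tau = 2\bar{x}'L\bar{x} + \tfrac{8}{n}\bigl[\sum_{i=1}^{n} X_{i,i} - I_s\bigr]$.

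For positive definiteness, Theorem \ref{lplus} gives $X_{i,i} = (L^\dagger)_{i,i} + \tfrac{1}{n}I_s$, so $\sum_i X_{i,i} - I_s = \sum_i (L^\dagger)_{i,i}$. Since $\bar{x}'L\bar{x}$ is positive semidefinite, it is enough to show that $M := \sum_i (L^\dagger)_{i,i}$ is positive definite. Suppose $v\neq 0$ satisfies $v'Mv = 0$; then each nonnegative term $v'(L^\dagger)_{i,i}v$ vanishes. Using the spectral decomposition $L^\dagger = \sum_k \lambda_k^{-1} u_k u_k'$ over the nonzero eigenvalues of $L$, the identity $v'(L^\dagger)_{i,i}v = \sum_k \lambda_k^{-1}(u_k[i]'v)^2 = 0$ forces $u_k[i]'v = 0$ for every $k$ and every block index $i$. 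If $\varepsilon_i\otimes v$ denotes the vector with $v$ in block $i$ and zero in every other block, this means $\varepsilon_i\otimes v \perp \mathrm{range}(L^\dagger) = \ker(L)^\perp$, so $\varepsilon_i\otimes v \in \ker L = \mathrm{range}(\mathbf{1}\otimes I_s)$; comparing blocks then forces $v=0$, a contradiction.

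The main obstacle is the positive definiteness step rather than the identity. The algebraic expansion is careful but mechanical once one rewrites $R$ and $\tau$ in their factored forms; by contrast, positive definiteness requires noticing that $\sum_i X_{i,i} - I_s = \sum_i (L^\dagger)_{i,i}$ and then using that $\ker L$ is exactly the $s$-dimensional $\mathrm{range}(\mathbf{1}\otimes I_s)$, which is precisely what prevents a nonzero $v$ from annihilating every diagonal block of $L^\dagger$ simultaneously.
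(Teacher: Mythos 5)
Your proposal is correct, and it reaches both halves of the theorem by routes that differ in detail from the paper's. For the identity, the paper plugs $\tau=L\bar{x}+\tfrac{2}{n}(\textbf{1}\otimes I_s)$ into $\tau'R\tau$, splits it into four terms, and evaluates them using the previously derived facts $LRL=-2L$ (equation (\ref{LRL})) and the expression (\ref{rwideq_1}) for $LR$; you instead factor $R=\bar{x}(\textbf{1}'\otimes I_s)+(\textbf{1}\otimes I_s)\bar{x}'-2X$, collapse the rank-structured terms with Remark \ref{1tau1}, and reduce $\tau'X\tau$ via $LXL=L$ and $X(\textbf{1}\otimes I_s)=\textbf{1}\otimes I_s$ (the latter is really the block-column consequence of (\ref{xjj}), or of the computation inside Theorem \ref{taudef}, rather than (\ref{xjj}) itself --- a harmless citation slip). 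This is the same direct-expansion strategy with different bookkeeping, and your algebra checks out. The genuinely different step is positive definiteness: the paper applies Lemma \ref{nonsingprin} to conclude that each diagonal block $X_{ii}-\tfrac{1}{n}I_s$ of $L^\dagger$ is positive definite (using that $L_{ii}=\sum_{j\sim i}W_{ij}^{-1}$ is nonsingular by connectedness), whereas you prove directly, via the spectral decomposition of $L^\dagger$ and the fact that $\ker L$ is exactly the column space of $\textbf{1}\otimes I_s$ (so a vector supported on a single block cannot lie in it when $n\geq 2$), that the sum $\sum_i(L^\dagger)_{ii}$ is positive definite. Your argument is self-contained and bypasses Lemma \ref{nonsingprin}, needing only $\operatorname{rank}L=ns-s$; the paper's route yields the stronger blockwise conclusion and reuses a lemma it has already set up. Both arguments (yours and the paper's) tacitly assume $n\geq 2$, which is implicit throughout.
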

\begin{proof}We have,
\begin{eqnarray}\nonumber \tau'R\tau&=&[(\textbf{1}'\otimes I_s)\bar{X}L+\frac{2}{n}(\textbf{1}'\otimes I_s)]
                             R[L\bar{X}(\textbf{1}\otimes I_s)+\frac{2}{n}(\textbf{1}\otimes
                             I_s)]\\
\nonumber                     &=&(\textbf{1}'\otimes
I_s)\bar{X}LRL\bar{X}(\textbf{1}\otimes I_s)
                             +\frac{2}{n}(\textbf{1}'\otimes I_s)RL\bar{X}(\textbf{1}\otimes
                             I_s)\\
\label{taurtau}              &&+\frac{2}{n}(\textbf{1}'\otimes
I_s)\bar{X}LR(\textbf{1}\otimes I_s)
                             +\frac{4}{n^2}(\textbf{1}'\otimes I_s)R(\textbf{1}\otimes I_s).
\end{eqnarray}
Now,
\begin{eqnarray*}
(\textbf{1}'\otimes I_s)\bar{X}LRL\bar{X}(\textbf{1}\otimes
I_s)&=&(\textbf{1}'\otimes
I_s)\bar{X}(-2L)\bar{X}(\textbf{1}\otimes I_s)\\
&=&-2\bar{x}'L\bar{x},
\end{eqnarray*}
and
\begin{eqnarray*}
\frac{2}{n}(\textbf{1}'\otimes I_s)\bar{X}LR(\textbf{1}\otimes
I_s)&=&\frac{2}{n}(\textbf{1}'\otimes I_s)\bar{X}(L\bar{X}(J\otimes
I_s)-2I_{ns}+\frac{2}{n}(J\otimes I_s))(\textbf{1}\otimes I_s)~\mbox{[By (\ref{rwideq_1})]}\\
&=&2\bar{x}'L\bar{x}.
\end{eqnarray*}
Similarly, we can derive the following
\begin{eqnarray*}
\frac{2}{n}(\textbf{1}'\otimes I_s)RL\bar{X}(\textbf{1}\otimes
I_s)&=&2\bar{x}'L\bar{x}.
\end{eqnarray*}
Finally,
\begin{eqnarray*}
\frac{4}{n^2}(\textbf{1}'\otimes I_s)R(\textbf{1}\otimes
I_s)&=&\frac{4}{n^2}(\textbf{1}'\otimes I_s)(\bar{X}(J\otimes
I_s)+(J\otimes I_s)\bar{X}-2X)(\textbf{1}\otimes I_s)\\
&=&\frac{4}{n^2}[n(\textbf{1}'\otimes I_s)\bar{X}(\textbf{1}\otimes
I_s)+ n(\textbf{1}'\otimes I_s)\bar{X}(\textbf{1}\otimes I_s)
-2(\textbf{1}'\otimes I_s)X(\textbf{1}\otimes I_s))]\\
&=&\frac{8}{n}\left[\sum_{i=1}^nX_{ii}-I_s\right].
\end{eqnarray*}
Then from (\ref{taurtau}) we get
\begin{eqnarray} \label{taurtaufinal}\tau'R\tau&=&2\bar{x}'L\bar{x}+\frac{8}{n}\left[\sum_{i=1}^nX_{ii}-I_s\right].
\end{eqnarray}
For $i=1,2,\cdots,n$, the matrices $X_{ii}-\frac{1}{n}I_s$ are the
diagonal blocks of the matrix $L^\dagger $. Since
$\sum_{i=1}^nX_{ii}-I_s=\sum_{i=1}^n(X_{ii}-\frac{1}{n}I_s)$,  by
applying Lemma \ref{nonsingprin},  we get the matrices $X_{ii}-\frac{1}{n}I_s$
are positive definite  for $i=1,2,\cdots,n$. Again,
$\bar{x}'L\bar{x}=\bar{x}'QQ'\bar{x}$ is also positive semi definite
matrix. Hence, from (\ref{taurtaufinal}), we get $\tau'R\tau$ is a
positive definite matrix.
\end{proof}
\section{Determinant, inverse and inertia of the resistance matrix}\label{det-inv-iner}

In this section, we derive formulae for  determinant, inverse and inertia of the resistance matrix.

Consider the partition of the Laplacian matrix $L$ as in (\ref{Apartition})
\begin{eqnarray} L=\left[
  \begin{array}{cccc}
L_{11} & L_{12} & \cdots & L_{1n}\\
L_{21} & L_{22} & \cdots & L_{2n}\\
\cdots & \cdots &\cdots &\cdots\\
L_{n1} & L_{n2} & \cdots & L_{nn}\\
\end{array} \right].
\end{eqnarray}
Then, by definition, we have $\sum_{j=1}^nL_{ij}=0$ and
$\sum_{i=1}^nL_{ij}=0$ for all $i,j\in\{1,2,\cdots,n\}$. By applying
Lemma \ref{equalcofactor}, we get the cofactors of any two blocks $L_{ij}$
and $L_{kl}$ in $L$ are equal. Using this fact, first we derive a formula for the determinant of the resistance matrix.
\begin{theo}\label{rdeterminant}
Let $G$ be a connected graph on $n$ vertices such that the edge weights
are $s\times s$ positive definite matrices, and let $L$ and $R$ be the Laplacian matrix  and the resistance matrix of $G$, respectively. Then $\det R=(-1)^{(n-1)s}2^{(n-3)s}\frac{\det(\tau'R\tau)}{\chi(G)}$,
where $\chi(G)$ is the cofactor of any block of $L$.
\end{theo}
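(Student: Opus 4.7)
My plan is to compute the bordered block determinant
\[
\Delta \;:=\; \det\begin{pmatrix} L & \tau \\ \tau' & 0 \end{pmatrix}
\]
in two ways and equate the results to solve for $\det R$. The first evaluation goes through an explicit formula for $R^{-1}$ to produce a quantity involving $\det R$ and $\det(\tau'R\tau)$; the second reduces the border to $\mathbf{1}\otimes I_s$ and extracts $\chi(G)$ via Lemma~\ref{equalcofactor}.

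The preparatory step is to establish that $R$ is invertible with $R^{-1}=-\tfrac12 L+\tau(\tau'R\tau)^{-1}\tau'$. For this I first prove the identity $R\tau=(\mathbf{1}\otimes I_s)\cdot\tfrac12\tau'R\tau$. Writing $\tau=L\bar x+\tfrac2n(\mathbf{1}\otimes I_s)$ (Theorem~\ref{taudef}), using the form $RL=(\mathbf{1}\otimes I_s)\tau'-2I_{ns}$ obtained by simplifying and transposing (\ref{rwideq_1}), and evaluating $R(\mathbf{1}\otimes I_s)$ from (\ref{rmatdef}) together with $X(\mathbf{1}\otimes I_s)=\mathbf{1}\otimes I_s$, the vector $R\tau$ collapses to a left-multiple of $\mathbf{1}\otimes I_s$; Remark~\ref{1tau1} then pins the coefficient to $\tfrac12\tau'R\tau$. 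Since Theorem~\ref{taurtaunonsing} makes $\tau'R\tau$ invertible, the candidate $R^{-1}$ is well-defined and is confirmed by direct multiplication using the two identities above.

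For the first evaluation of $\Delta$, consider $M_2=\begin{pmatrix}-\tfrac12 L & \tau\\ \tau' & 0\end{pmatrix}$. Adding $\tau(\tau'R\tau)^{-1}$ times the last $s$ rows to the first $ns$ replaces the top-left block by $-\tfrac12 L+\tau(\tau'R\tau)^{-1}\tau'=R^{-1}$, leaving the other blocks untouched. Schur complement with respect to the top-left block $R^{-1}$ then gives $\det M_2=(-1)^s\det(\tau'R\tau)/\det R$, and rescaling rows and columns to convert $-L/2$ back to $L$ yields $\Delta=(-1)^{ns}2^{(n-1)s}\det(\tau'R\tau)/\det R$.

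For the second evaluation of $\Delta$, use $\tau=L\bar x+\tfrac2n(\mathbf{1}\otimes I_s)$ to run symmetric column and row operations that replace $\tau$ and $\tau'$ in the border by $\tfrac2n(\mathbf{1}\otimes I_s)$ and $\tfrac2n(\mathbf{1}'\otimes I_s)$, at the cost of some $s\times s$ block $C$ appearing in the bottom-right corner. Pulling out the scalar factor $(2/n)^{2s}$ and then adding $\tfrac1n(\mathbf{1}\otimes I_s)$ times the last block row to the first $ns$ rows turns the top-left into $\tilde L=L+\tfrac1n J\otimes I_s$; a Schur complement at the invertible $\tilde L$ shows the reduced determinant equals $(-n)^s\det\tilde L$, independent of $C$. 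Combined with the block matrix-tree identity $\det\tilde L=n^s\chi(G)$, which is the block analog of Kirchhoff's theorem and follows from Lemma~\ref{equalcofactor}, this gives $\Delta=(-4)^s\chi(G)$. Equating the two expressions for $\Delta$ and simplifying the signs and powers of $2$ produces the claimed formula. The main technical obstacle is the last identity $\det\tilde L=n^s\chi(G)$: in the scalar case ($s=1$) it is Kirchhoff's matrix-tree theorem, but for $s>1$ it is not an immediate consequence of Lemma~\ref{equalcofactor} and would need a separate argument, either via a perturbation ($L+\epsilon I$, $\epsilon\to 0$) combined with the rank-$s$ update structure of $\tilde L$, or via a generalized Cauchy--Binet expansion of $\tilde L=QQ'+\tfrac1n(\mathbf{1}\otimes I_s)(\mathbf{1}'\otimes I_s)$.
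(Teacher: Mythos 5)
Your overall strategy is the same as the paper's: border the (scaled) Laplacian with $\tau$, evaluate the bordered determinant once by a Schur complement using $R^{-1}=-\tfrac12 L+\tau(\tau'R\tau)^{-1}\tau'$ (which you rederive correctly; this is the content of Theorem \ref{rinverse}, and your sign/power-of-$2$ bookkeeping in that evaluation is consistent with the stated formula), and once in a way that should produce $\chi(G)$. The genuine gap is exactly where you flag it: your second evaluation funnels everything through the identity $\det\bigl(L+\tfrac1n J\otimes I_s\bigr)=n^{s}\chi(G)$, which you do not prove. For $s>1$ this is not a quotable fact: Lemma \ref{equalcofactor} only says that all block cofactors of $L$ are equal; it does not identify their common value with $n^{-s}\det\bigl(L+\tfrac1n J\otimes I_s\bigr)$ (equivalently, with $n^{-s}$ times the product of the nonzero eigenvalues of $L$). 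Since $\chi(G)$ enters your final formula only through this identity, the argument as written does not close. (A smaller slip: after adding $\tfrac1n(\mathbf{1}\otimes I_s)$ times the last block row to the first $ns$ rows, the top-right border changes as well; the Schur complement at $L+\tfrac1n J\otimes I_s$ is still $-nI_s$ because $(\mathbf{1}'\otimes I_s)X(\mathbf{1}\otimes I_s)=nI_s$, but this needs to be said.)

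The paper's second evaluation avoids any matrix-tree-type identity, and you could adopt it verbatim: in the bordered matrix $\left[\begin{smallmatrix}-\frac12 L & \tau\\ \tau' & -\tau'R\tau\end{smallmatrix}\right]$, add all block columns except the last to the first block column and all block rows except the last to the first block row. Since the block row and column sums of $L$ vanish and $(\mathbf{1}'\otimes I_s)\tau=2I_s$ (Remark \ref{1tau1}), the first block row and column collapse to zero except for $2I_s$ in the border, and a Laplace expansion leaves $\pm 2^{\,\cdot}\det L(1,1)=\pm 2^{\,\cdot}\chi(G)$ directly, with no appeal to $\det\bigl(L+\tfrac1n J\otimes I_s\bigr)$. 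Alternatively, if you prefer to keep your route, the missing identity can be proved by the very same two-way trick applied to the simpler border $\mathbf{1}\otimes I_s$: evaluate $\det\left[\begin{smallmatrix} L & \mathbf{1}\otimes I_s\\ \mathbf{1}'\otimes I_s & 0\end{smallmatrix}\right]$ once via a Schur complement at $L+\tfrac1n J\otimes I_s$ (giving $(-n)^s\det\bigl(L+\tfrac1n J\otimes I_s\bigr)$) and once via the block-sum row/column additions above (giving $\pm n^{2s}\chi(G)$). Either way, that lemma must actually be supplied; as submitted, your proof is incomplete at precisely the step that produces $\chi(G)$.
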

\begin{proof}By  Theorem \ref{taurtaunonsing}, we have $\tau'R\tau$ is a
nonsingular matrix. Using the Schur  formula for the determinant, we have
\begin{eqnarray}\label{Lpart}\nonumber\det\left[
                                \begin{array}{cc}
                                  -\frac{1}{2}L & \tau \\
                                  \tau' & -\tau'R\tau \\
                                \end{array}
                              \right]
\nonumber&=&\det(-\tau'R\tau)\det(-\frac{1}{2}L+\tau(\tau'R\tau)^{-1}\tau')\\
\nonumber&=&\det(-\tau'R\tau)\det R^{-1}\\
\label{detr}&=&(-1)^s\det(\tau'R\tau)\det R^{-1}.
\end{eqnarray}
Using Remark \ref{1tau1}, adding all the column blocks except the last column block to the first column block and  adding all the row blocks
except the last row block to the first row block of the matrix $\left[
                                \begin{array}{cc}
                                  -\frac{1}{2}L & \tau \\
                                  \tau' & -\tau'R\tau \\
                                \end{array}
                              \right]$ , we get
\begin{eqnarray*}\det\left[
                                \begin{array}{c|c}
                                  -\frac{1}{2}L & \tau \\\hline
                                  \tau' & -\tau'R\tau \\
                                \end{array}
                              \right]
&=&\det\left[
                                \begin{array}{c|c|c}
                                \vspace{.2cm} \textbf{0}_{s\times s} & \textbf{0}_{s\times (n-1)s} & 2I_s \\ \hline
                                \vspace{.2cm} \textbf{0}_{(n-1)s\times s} & -\frac{1}{2}L(1,1) & * \\\hline
                                \vspace{.2cm} 2I_s & * & -\tau'R\tau \\
                                \end{array}
                              \right],\\
&&~~~~~~~~~~~~~~~~~~~~\mbox{where $L(1,1)$ is the cofactor of $L_{1,1}$ in $L$.}\\
&=&(-1)^{(1+2+\cdots+s)+(ns+1+ns+2+\cdots+ns+s)}\det(2I_s)\det\left[
                                \begin{array}{c|c}
                                \vspace{.2cm} \textbf{0}_{(n-1)s\times s} & -\frac{1}{2}L(1,1) \\\hline
                                \vspace{.2cm} 2I_s & * \\
                                \end{array}
                              \right],~\\
&&~~~~~~~~~~~~~~~~~~~~~~~~~~~~~~~~~~~~~~~~~~~~~~~~~~~~\mbox{by expanding Laplace expansion.}\\
&=&(-1)^{ns^2}2^s\det\left[
                                \begin{array}{c|c}
                                \vspace{.2cm} \textbf{0}_{(n-1)s\times s} & -\frac{1}{2}L(1,1) \\\hline
                                \vspace{.2cm} 2I_s & * \\
                                \end{array}
                              \right]\\
&=&(-1)^{ns^2}2^s(-1)^{(n-1)s^2}\det(2I_s)\det(-\frac{1}{2}L(1,1))\\
&=&\frac{(-1)^{ns}}{2^{(n-3)s}}\det L(1,1)\\
&=&\frac{(-1)^{ns}}{2^{(n-3)s}}\chi(G).
\end{eqnarray*}
Using this in (\ref{Lpart}) we get
$$\det R=(-1)^{(n-1)s}2^{(n-3)s}\frac{\det(\tau'R\tau)}{\chi(G)}.$$
\end{proof}
In the next theorem, we establish that the resistance matrices are nonsingular and  derive a formula for the inverse of them.
\begin{theo}\label{rinverse}
Let $G$ be a connected graph on $n$ vertices such that the edge weights
are $s\times s$ positive definite matrices, and let $L$ and $R$ be the Laplacian matrix  and the resistance matrix of $G$, respectively. Then  $R$ is nonsingular and $R^{-1}=-\frac{1}{2}L+\tau(\tau'R\tau)^{-1}\tau'$.
\end{theo}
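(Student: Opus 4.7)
The plan is to verify the proposed formula directly by showing that $RM = I_{ns}$, where $M = -\tfrac{1}{2}L + \tau(\tau'R\tau)^{-1}\tau'$. Since $\tau'R\tau$ is positive definite by Theorem \ref{taurtaunonsing}, $M$ is well defined; once $RM = I_{ns}$ is established, the nonsingularity of $R$ and the inverse formula follow simultaneously.

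The first key step is to derive the clean rank-$s$ identity
\[ RL = (\mathbf{1}\otimes I_s)\tau' - 2I_{ns}. \]
Starting from equation (\ref{rwideq_1}), which gives $LR = L\bar{X}(J\otimes I_s) - 2I_{ns} + \tfrac{2}{n}(J\otimes I_s)$, I would use Theorem \ref{taudef} in the form $L\bar{X}(\mathbf{1}\otimes I_s) = \tau - \tfrac{2}{n}(\mathbf{1}\otimes I_s)$ together with the factorization $J\otimes I_s = (\mathbf{1}\otimes I_s)(\mathbf{1}'\otimes I_s)$. Then $L\bar{X}(J\otimes I_s) = \tau(\mathbf{1}'\otimes I_s) - \tfrac{2}{n}(J\otimes I_s)$, and the $\tfrac{2}{n}(J\otimes I_s)$ contributions cancel, leaving $LR = \tau(\mathbf{1}'\otimes I_s) - 2I_{ns}$. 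Transposing and using symmetry of $L$ and $R$ yields the displayed identity.

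The second key step is to show that every column of $R\tau$ lies in the column space of $\mathbf{1}\otimes I_s$; concretely, that $R\tau = \tfrac{1}{2}(\mathbf{1}\otimes I_s)(\tau'R\tau)$. I would expand $R\tau = RL\bar{X}(\mathbf{1}\otimes I_s) + \tfrac{2}{n}R(\mathbf{1}\otimes I_s)$, substitute the step-1 identity into the first summand, and compute $R(\mathbf{1}\otimes I_s)$ directly from (\ref{rmatdef}) using the fact (already established in the proof of Theorem \ref{taudef}) that $X(\mathbf{1}\otimes I_s) = \mathbf{1}\otimes I_s$. After cancellation of the $\pm 2\bar{x}$ terms, the output has the shape $R\tau = (\mathbf{1}\otimes I_s)Z$ for some $s\times s$ matrix $Z$. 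Multiplying on the left by $\tau'$ and invoking $\tau'(\mathbf{1}\otimes I_s) = 2I_s$ from Remark \ref{1tau1} gives $\tau'R\tau = 2Z$, so $Z = \tfrac{1}{2}\tau'R\tau$ and the claim follows.

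Combining the two identities,
\[ RM = -\tfrac{1}{2}RL + R\tau(\tau'R\tau)^{-1}\tau' = -\tfrac{1}{2}\bigl[(\mathbf{1}\otimes I_s)\tau' - 2I_{ns}\bigr] + \tfrac{1}{2}(\mathbf{1}\otimes I_s)\tau' = I_{ns}, \]
which proves both that $R$ is nonsingular and that $R^{-1} = M$. The main obstacle is the second step: several Kronecker-structured terms have to be bookkept and simplified, and one must recognize that the computation collapses precisely because $R\tau$ is forced into the column space of $\mathbf{1}\otimes I_s$. This is ultimately a consequence of the clean rank-$s$ identity from step one, which encodes all the structural information needed to invert the $ns\times ns$ resistance matrix via a rank-$s$ correction to $-\tfrac{1}{2}L$.
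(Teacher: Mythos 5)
Your proof is correct and follows essentially the same route as the paper: both arguments rest on the rank-$s$ identity $LR=\tau(\textbf{1}'\otimes I_s)-2I_{ns}$ (the paper's (\ref{lrtau1})) and on $R\tau=\frac{1}{2}(\textbf{1}\otimes I_s)\tau'R\tau$, and then verify that the product of $R$ with $-\frac{1}{2}L+\tau(\tau'R\tau)^{-1}\tau'$ is the identity. The only immaterial difference is how the second identity is obtained: you expand $R\tau$ directly using Theorem \ref{taudef} and (\ref{rmatdef}), whereas the paper deduces $LR\tau=0$ and invokes the fact that the kernel of $L$ is exactly the column space of $\textbf{1}\otimes I_s$ to write $R\tau=(\textbf{1}\otimes I_s)C$ before identifying $C=\frac{1}{2}\tau'R\tau$.
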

\begin{proof}
From equation (\ref{rwideq_1}), we have
\begin{eqnarray}\nonumber LR&=&L\bar{X}(J\otimes I_s)-2I_{ns}+\frac{2}{n}J\otimes I_s\\
\nonumber\Rightarrow LR+2I_{ns}&=&[L\bar{X}(\emph{\textbf{1}}\otimes I_s)+\frac{2}{n}(\emph{\textbf{1}}\otimes I_s)](\emph{\textbf{1}}'\otimes I_s)\\
\label{lrtau1}&=&\tau(\emph{\textbf{1}}'\otimes I_s)~~\mbox{[By Lemma \ref{taudef}]}\\
\nonumber\Rightarrow (LR+2I_{ns})\tau &=&\tau(\emph{\textbf{1}}'\otimes I_s)\tau\\
\nonumber&=&2\tau ~~\mbox{[By Remark \ref{1tau1}]}\\
\label{lrtau0}\Rightarrow LR\tau&=&0.
\end{eqnarray}
From  Theorem \ref{taurtaunonsing}, we  have $\tau^'R\tau$ is a
positive definite matrix, so $R\tau$ is  nonzero. Since $L$ has exactly $s$ number of zero
eigenvalues and $L(\emph{\textbf{1}}\otimes I_s)=0$, the column space of $R\tau$ is  a subspace
of column space of $(\emph{\textbf{1}}\otimes I_s)$. Thus, there exist
a matrix $C$ such that
\begin{eqnarray}\nonumber R\tau &=&(\emph{\textbf{1}} \otimes I_s)C\\
\nonumber \Rightarrow \tau' R\tau &=&\tau'(\emph{\textbf{1}}\otimes I_s)C\\
\nonumber &=&2C ~~\mbox{[By Remark \ref{1tau1}]}\\
\nonumber \Rightarrow C&=&\frac{1}{2}\tau'R\tau.
\end{eqnarray}
Now,
\begin{eqnarray}\nonumber R\tau &=&\frac{1}{2}(\emph{\textbf{1}}\otimes I_s)\tau'R\tau\\
\label{taur} \Rightarrow
\tau'R&=&\frac{1}{2}\tau'R\tau(\emph{\textbf{1}}'\otimes I_s),
\end{eqnarray}
and
\begin{eqnarray}\nonumber (-\frac{1}{2}L+\tau(\tau'R\tau)^{-1}\tau')R &=&-\frac{1}{2}LR+\tau(\tau'R\tau)^{-1}\tau'R\\
\nonumber  &=&-\frac{1}{2}LR+\frac{1}{2}\tau(\tau'R\tau)^{-1}\tau'R\tau(\emph{\textbf{1}}'\otimes I_s)~~\mbox{[By (\ref{taur})]}\\
\nonumber &=&-\frac{1}{2}LR+\frac{1}{2}\tau(\emph{\textbf{1}}'\otimes I_s)\\
\nonumber &=&I_{ns}.~~\mbox{[By (\ref{lrtau1})]}
\end{eqnarray}
Thus, the matrix $R$ is nonsingular, and
$R^{-1}=-\frac{1}{2}L+\tau(\tau'R\tau)^{-1}\tau'$.
\end{proof}
\begin{re}\label{qrq}
\emph{  Using (\ref{lrtau1}) we get
\begin{eqnarray*}&&~~LR=\tau(\emph{\textbf{1}}'\otimes I_s)-2I_{ns}\\
&&\Rightarrow LRQ=\tau(\emph{\textbf{1}}'\otimes I_s)Q-2Q\\
&&\Rightarrow QQ'RQ=-2Q~~[\mbox{As $(\emph{\textbf{1}}'\otimes
I_s)Q=\textbf{0}$}]\\
&&\Rightarrow Q'RQ=-2I_{(n-1)s}.~~[\mbox{As $Q$ has full column
rank}]
\end{eqnarray*}}
\end{re}
Interlacing inequality for the eigenvalues of distance and Laplace
matrices of a matrix weighted tree is given in \cite{fou-raj-bap}.
Using the fact $Q'RQ=-2I_{(n-1)s}$ we can similarly prove the
following:
\begin{theo}\label{Rinterlace}
Let $G$ be a weighted graph on $n$ vertices, where each weight is a
positive definite matrix of order $s$. Let $R$ be the resistance
matrix of $G$ and $L$ denote the Laplacian matrix of $G$. Let
$\mu_1\geq\mu_2\geq\cdots\geq\mu_{ns}$ be the eigenvalues of $R$ and
$\lambda_1\geq\lambda_2\geq\cdots\geq\lambda_{ns-s}>\lambda_{ns-s+1}=\cdots=\lambda_{ns}=0$
be the eigenvalues of $L$. Then
$$\mu_{s+i}\leq-\frac{2}{\lambda_i}\leq\mu_i~~\mbox{for}~~i=1,2,\cdots,ns-s.$$
\end{theo}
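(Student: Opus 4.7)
The plan is to apply the Cauchy interlacing theorem to a compression of $R$ constructed via the spectral decomposition of $L$, leveraging the identity $Q'RQ = -2I_{(n-1)s}$ recorded in Remark \ref{qrq}.

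First I would fix $Q$ explicitly through the spectral decomposition of $L$. Write $L = V\Lambda_+ V'$, where $V$ is the $ns\times(ns-s)$ matrix whose columns are orthonormal eigenvectors of $L$ corresponding to the nonzero eigenvalues $\lambda_1\geq\cdots\geq\lambda_{ns-s}$, and $\Lambda_+ = \mathrm{diag}(\lambda_1,\ldots,\lambda_{ns-s})$. Setting $Q = V\Lambda_+^{1/2}$, one has $L = QQ'$, the matrix $Q$ has full column rank $(n-1)s$, and its column space is orthogonal to $\ker L$, which is spanned by the columns of $\emph{\textbf{1}}\otimes I_s$; in particular $(\emph{\textbf{1}}'\otimes I_s)Q = 0$. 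The derivation in Remark \ref{qrq} then applies verbatim to yield $Q'RQ = -2I_{(n-1)s}$.

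Substituting $Q = V\Lambda_+^{1/2}$ into this relation gives $V'RV = -2\Lambda_+^{-1}$. The right-hand side is diagonal with entries $-2/\lambda_1, -2/\lambda_2,\ldots,-2/\lambda_{ns-s}$, and since $\lambda_1\geq\lambda_2\geq\cdots\geq\lambda_{ns-s}>0$, these values are already in nonincreasing order. Thus $V'RV$ is a compression of the symmetric matrix $R$ to an $(ns-s)$-dimensional subspace by the isometry $V$, and its eigenvalues are exactly $-2/\lambda_i$.

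Finally I would invoke the Cauchy interlacing theorem: for a symmetric matrix $A$ of order $N$ and a matrix $V$ of size $N\times k$ with $V'V = I_k$, the eigenvalues $\nu_1\geq\cdots\geq\nu_k$ of $V'AV$ satisfy $\mu_i(A)\geq\nu_i\geq\mu_{N-k+i}(A)$. Applying this with $N = ns$, $k = ns-s$ (so $N-k = s$), $A = R$, and $\nu_i = -2/\lambda_i$ immediately delivers
\[
\mu_i \;\geq\; -\frac{2}{\lambda_i} \;\geq\; \mu_{s+i}, \qquad i = 1, 2, \ldots, ns-s,
\]
which is the statement of the theorem. The only real obstacle is justifying that $Q'RQ = -2I_{(n-1)s}$ holds for a $Q$ manufactured from the spectral decomposition rather than from the edge-incidence matrix (the latter need not have full column rank for non-tree graphs); the spectral choice of $Q$ above repairs this issue cleanly, and the remainder is a routine invocation of Cauchy interlacing.
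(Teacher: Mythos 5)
Your proof is correct and follows the route the paper itself sketches: the text merely cites the identity $Q'RQ=-2I_{(n-1)s}$ from Remark \ref{qrq} and defers to the analogous argument in \cite{fou-raj-bap}, which is exactly the compression-plus-Cauchy-interlacing argument you carry out in full. Your replacement of the edge-incidence matrix by $Q=V\Lambda_+^{1/2}$ is a genuine and needed repair, since for $m>n-1$ the incidence matrix is $ns\times ms$ and cannot have full column rank, so the final step of Remark \ref{qrq} does not apply to it literally; with your choice of $Q$ the identity $V'RV=-2\Lambda_+^{-1}$ and the interlacing conclusion follow cleanly.
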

For  an $n \times n$ symmetric matrix $A$, let $p_+(A), p_-(A)$ and $p_0(A)$ denote  the number of positive, negative and zero eigenvalues of A, respectively.  Then $3$-tuple
$(p_+(A), p_-(A), p_0(A))$ is called the inertia of the matrix $A$.

Next we derive the formula for the inertia of the resistance matrix of a graph with
matrix weights.
\begin{theo}\label{Rinertia}
Let $G$ be a weighted graph on $n$ vertices, where each weight is a
positive definite matrix of order $s$ and $R$ be the resistance
matrix of $G$. Then inertia of $R$ is $(s,ns-s,0)$.
\end{theo}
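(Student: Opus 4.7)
The plan is to combine three ingredients already established in the paper: the nonsingularity of $R$ from Theorem \ref{rinverse}, the positive definiteness of $\tau'R\tau$ from Theorem \ref{taurtaunonsing}, and the interlacing bound from Theorem \ref{Rinterlace}. Since $R$ is nonsingular, $p_0(R)=0$ and $p_+(R)+p_-(R)=ns$, so it suffices to prove the two one-sided bounds $p_-(R)\geq ns-s$ and $p_+(R)\geq s$; equality in both must then hold.

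First I would read off the lower bound on $p_-(R)$ directly from the interlacing of Theorem \ref{Rinterlace}. Writing $\lambda_1\geq\cdots\geq\lambda_{ns-s}>0$ for the nonzero eigenvalues of $L$, the inequality $\mu_{s+i}\leq -\tfrac{2}{\lambda_i}<0$ for $i=1,\ldots,ns-s$ shows that $\mu_{s+1},\mu_{s+2},\ldots,\mu_{ns}$ are all strictly negative. This gives $p_-(R)\geq ns-s$.

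Next I would extract the lower bound on $p_+(R)$ from Theorem \ref{taurtaunonsing}. The matrix $\tau$ is of size $ns\times s$, and $\tau'R\tau$ is positive definite of size $s\times s$. In particular $\tau'R\tau$ has rank $s$, which forces $\tau$ to have full column rank $s$. Consequently, $V=\tau\,\mathbb{R}^s$ is an $s$-dimensional subspace of $\mathbb{R}^{ns}$, and for any nonzero $v=\tau c\in V$ we have $v'Rv=c'\tau'R\tau c>0$. Hence the quadratic form $x\mapsto x'Rx$ is positive definite on an $s$-dimensional subspace, and by the Courant--Fischer min--max principle $R$ must have at least $s$ positive eigenvalues, i.e.\ $p_+(R)\geq s$.

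Finally, the two bounds combined with $p_+(R)+p_-(R)=ns$ from Theorem \ref{rinverse} force $p_+(R)=s$ and $p_-(R)=ns-s$, giving inertia $(s,ns-s,0)$ as claimed. I do not foresee a genuine obstacle here; the only subtlety is recording that $\tau$ has full column rank, which is an automatic consequence of Theorem \ref{taurtaunonsing}.
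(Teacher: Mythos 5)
Your proof is correct, and the first half (reading $p_-(R)\geq ns-s$ off the strict inequalities $\mu_{s+i}\leq -2/\lambda_i<0$ from Theorem \ref{Rinterlace}) is exactly what the paper does. Where you diverge is in establishing $p_+(R)\geq s$: you invoke Theorem \ref{taurtaunonsing}, note that positive definiteness of $\tau'R\tau$ forces $\tau$ to have full column rank $s$, and conclude via Courant--Fischer that the quadratic form of $R$ is positive definite on the $s$-dimensional subspace $\tau\,\mathbb{R}^s$, hence $\mu_s>0$. The paper instead observes that each diagonal block $R_{i,i}=K_{i,i}+K_{i,i}-2K_{i,i}=0$, so $R$ contains a $0_{s\times s}$ principal submatrix; Cauchy interlacing against this zero submatrix gives $\mu_i\geq 0$ for $i=1,\dots,s$, and nonsingularity (Theorem \ref{rinverse}) upgrades this to strict positivity. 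Both routes are sound and of comparable length. The paper's argument is marginally more elementary in that it needs nothing about $\tau$ for this step, only the trivial fact that the diagonal blocks of $R$ vanish; yours has the small advantage of actually producing strict inequalities $\mu_1\geq\cdots\geq\mu_s>0$ without a separate appeal to nonsingularity for the positive part (you still need nonsingularity, or the strictness of $\mu_{s+i}<0$, to rule out zero eigenvalues overall, and you do use it). One minor point worth recording explicitly if you write this up: the conclusion $\min_{0\neq v\in V} v'Rv/v'v>0$ requires compactness of the unit sphere of $V$, which is automatic but is the step that converts pointwise positivity into the eigenvalue bound $\mu_s>0$.
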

\begin{proof}
Let $\mu_1\geq\mu_2\geq\cdots\geq\mu_{ns}$ be the eigenvalues of
$R$. Using the previous theorem we have $\mu_{s+i}\leq0$, for all
$i=1,2,\cdots,(n-1)s$. Again in the matrix $R$, $0_{s\times s}$ is a
principal submatrix.  Since $R$ is nonsingular, using
interlacing theorem, we have $\mu_{i}\geq0$, for all
$i=1,2,\cdots,s$. Hence the inertia of $R$ is $(s,ns-s,0).$
\end{proof}
\textbf{Acknowledgement:}The second author acknowledges the support
of the JC Bose Fellowship, Department of Science Technology,
Government of India. M. Rajesh Kannan would like to thank Department
of Science and Technology for the financial support.
\bibliographystyle{amsplain}
\bibliography{arxiv_resis}
%
%
%
%
%
%
%
%
%
%
%
%
%
%

\end{document}